\documentclass[12pt,english]{article}
\usepackage[T1]{fontenc}
\usepackage[utf8]{inputenc}
\usepackage{geometry}
\usepackage{babel}
\usepackage{mathtools}
\usepackage{amsthm}
\usepackage{amssymb}
\usepackage[unicode=true,pdfusetitle,
 bookmarks=true,bookmarksnumbered=false,bookmarksopen=false,
 breaklinks=false,backref=false,hidelinks]
 {hyperref}

\makeatletter
\numberwithin{equation}{section}
\numberwithin{figure}{section}
\theoremstyle{plain}
\newtheorem{thm}{Theorem}
\newtheorem{prop}[thm]{Proposition}
\newtheorem{lem}[thm]{Lemma}

\theoremstyle{definition}
\newtheorem{defn}[thm]{Definition}

\theoremstyle{remark}
\newtheorem{rem}[thm]{Remark}
\theoremstyle{plain}

\date{}
\usepackage{tikz-cd}
\IfFileExists{libertine.sty}{\usepackage[tt=false]{libertine}}{}
\usepackage[parfill]{parskip}
\def\thm@space@setup{%
  \thm@preskip=10pt plus 2pt minus 2pt
  \thm@postskip=10pt plus 2pt minus 2pt
}
\usepackage{enumitem}
\setlist[itemize]{noitemsep,topsep=5pt}

\usepackage{titlesec}
\usepackage{titletoc}
\titleformat{\section}{\large\bfseries\filleft}{\thesection}{1em}{}[{\titlerule[0.8pt]}]
\titleformat{\part}[block]
  {\normalfont\Large\bfseries\raggedright}
  {}{0pt}{}
\titlespacing*{\part}{0pt}{3.5ex plus 1ex minus .2ex}{1.5ex}

\titlecontents{part}
  [0pt]
  {\addvspace{.9em}\bfseries}
  {}
  {}
  {}
\titlecontents{section}
  [1.5em]
  {\setlength{\parfillskip}{0pt plus 1fil}\contentsmargin{0pt}}
  {\contentslabel{1.75em}}
  {}
  {\titlerule*[.6pc]{.}\hspace*{1.2em}\makebox[0pt][r]{\thecontentspage}}

\newenvironment{ack}{\textit{Acknowledgements.}}{}
\renewcommand\labelenumi{(\roman{enumi})}
\renewcommand\theenumi\labelenumi

\DeclareMathOperator{\codim}{codim}
\DeclareMathOperator{\ch}{char}

\DeclareMathOperator{\Fr}{Fr}

\DeclareMathOperator{\Sym}{Sym}

\DeclareMathOperator{\im}{im}

\DeclareMathOperator{\height}{ht}

\DeclareMathOperator{\Id}{Id}

\DeclareMathOperator{\init}{in}
\DeclareMathOperator{\Sing}{Sing}
\DeclareMathOperator{\Jac}{Jac}
\DeclareMathOperator{\Tr}{Tr}

\makeatother

\global\long\def\A{\mathbb{A}}%
\global\long\def\C{\mathbb{C}}%
\global\long\def\F{\mathbb{F}}%
\global\long\def\P{\mathbb{P}}%
\global\long\def\Q{\mathbb{Q}}%
\global\long\def\R{\mathbb{R}}%
\global\long\def\O{\mathcal{O}}%
\global\long\def\Z{\mathbb{Z}}%
\global\long\def\ol#1{\overline{#1}}%

\begin{document}

\title{Birch's theorem over function fields with quadratically many variables}
\author{Matthew Hase-Liu}
\maketitle
\begin{abstract}
For smooth hypersurfaces over rational function fields of characteristic greater than the degree and with sufficiently large
constant field, we improve the number of variables required in Birch's theorem from an exponential function of the degree to a quadratic one. This agrees, up to constants, with the sharp quadratic threshold for the unconditional existence of rational points on smooth hypersurfaces. 

A circle method argument reduces the required cancellation to lower bounds for the codimensions of certain singular loci associated with complete exponential sums over finite fields. Our main innovation is a new method for proving these bounds: we introduce the notion of multiplication rank for the linear functionals indexing these exponential sums and combine the resulting rank stratification with a weighted degeneration of the Jacobian equations to obtain a codimension estimate that grows linearly with multiplication rank.
\end{abstract}
\tableofcontents{}
\section{Introduction}
Birch famously showed in \cite{birch1962forms} that a smooth degree $d$ hypersurface over the rational numbers satisfies the expected asymptotic formula for its integral points provided that the number of variables exceeds $(d-1)2^d$. Lee \cite{lee2011birch} subsequently established a function field analogue with the same exponential dependence on the degree and an explicit error term.

In his thesis \cite{pugin2011}, Pugin introduced an alternative to Birch's original approach in which the minor arcs are treated as complete exponential sums over finite fields and estimates of Katz reduce the required cancellation to bounds on the dimensions of certain singular loci. Sawin \cite{sawin2024asymptoticwaringsproblemfunction} then developed this method further by bounding the tangent spaces of these singular loci to obtain improved results for Waring's problem over function fields. In this paper, we introduce a different extension of Pugin's approach to address arbitrary smooth hypersurfaces and reduce the required number of variables from exponential to quadratic in the degree.

The quadratic growth in our hypothesis is unavoidable for a result guaranteeing rational points on every smooth degree $d$ hypersurface without local-solubility assumptions. Indeed, in Lemma \ref{lem:quad-sharp}, we show the existence of a smooth degree $d$ hypersurface in $\P^{d^2-1}_{\F_q(t)}$ with no rational point. Together with Lang's $C_2$ theorem, this shows that the threshold for unconditional existence of rational points, even among smooth hypersurfaces, has quadratic order. However, this sharpness statement concerns unconditional existence; it does not determine the optimal range for a Birch asymptotic with positive leading constant under local-solubility hypotheses.

We now explain the setup in more detail. Let $K=\F_{q}(t),p=\ch\F_{q},d\ge2,$ and $p>d$. Fix $F\in K[x_{1},\ldots,x_{n}]$
a homogeneous form of degree $d$. After clearing denominators and ensuring that the coefficients have no non-constant common factors,
we may assume that it lives in $\F_{q}[t][x_{1},\ldots,x_{n}]$. Let
$c$ be the maximum of the degrees in $t$ of the coefficients of $F$. 

Write $P_{i}=H^{0}(\P^{1},\O(i\infty))$, which we identify with polynomials in $\F_{q}[t]$ with $\deg\le i$, and let\[
N_{e}(F)=\#\{\vec{g}\in P_{e}^{n}\colon F(\vec{g}(t))=0\}.
\]Note that $F(\vec{g}(t))\in P_{de+c}$. 

Moreover, the closed points of $\P^{1}$ are naturally in bijection with
the places of the function field $\F_{q}(\P^{1})$, and we denote
both by $|\P^{1}|$. For $v$ a closed point of $\P^{1}$, let $\O_{\P^{1},v}$
be the corresponding local ring with uniformizer $\pi_{v}$ and $F_{v}(\vec{x})$
be the local equation induced by $F$---more precisely, for $v=\infty$,
take $u=t^{-1}$ and set $F_{\infty}(u,\vec{x})=u^{c}F(u^{-1},\vec{x})$.
Then, denote by 
\[
\ell_{v}(F)=\lim_{m\to\infty}q^{-m(n-1)\deg v}\#\{\vec{x}\bmod\pi_{v}^{m}\colon F_{v}(\vec{x})\equiv0\bmod\pi_{v}^{m}\}.
\]
The expected asymptotic formula for the integral points is the following:
\begin{equation}\label{eq:mainasy}
N_{e}(F)=q^{n(e+1)-(de+c+1)}\left(\prod_{v\in|\P^{1}|}\ell_{v}(F)+O_{F,q,d,n}(q^{-\eta e})\right)\text{ as }e\to\infty.
\end{equation}
The main result establishes the asymptotic in quadratically many variables.
\begin{thm}
\label{thm:main}Assume that $V(F)\subset\P_{K}^{n-1}$ is smooth. Then, we have \eqref{eq:mainasy} as soon as  
\[
\eta=\frac{n}{d-1}\left(\frac{1}{2}-\log_q(d-1)\right)-d>0.
\] Specifically, the local density limits exist and the Euler product converges absolutely.
\end{thm}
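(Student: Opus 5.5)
We follow the function-field circle method in the form developed by Pugin and Sawin. Write $N_e(F)=\int_{\mathbb{T}} S(\alpha)\,d\alpha$, where $\mathbb{T}=K_\infty/\F_q[t]$ and $S(\alpha)=\sum_{\vec g\in P_e^n}\psi(\alpha F(\vec g))$ for the standard additive character $\psi$ of $K_\infty$. By Dirichlet approximation every $\alpha$ lies near a rational $a/r$ with $r$ monic, $\deg r$ bounded in terms of $e$, and $|\alpha-a/r|$ small. Major arcs are those with $\deg r$ and $|r\alpha-a|$ below thresholds of size $\theta e$. The point count identity $\#\{\vec x\in P_e^n\}=q^{n(e+1)}$ and Poisson summation over $P_e$ factor $S$ on each arc into a complete sum modulo $r$ times an integral over the box. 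On the major arcs this gives the main term $q^{n(e+1)-(de+c+1)}\mathfrak{S}\,\mathfrak{J}$. The singular series $\mathfrak{S}$ factors as $\prod_{v\ne\infty}\ell_v(F)$ by multiplicativity in $r$ (CRT). The singular integral $\mathfrak{J}$ is $\ell_\infty(F)$, via the usual computation $\int\int\psi(\beta F_\infty)=$ density at $\infty$.

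Absolute convergence of the Euler product and existence of the limits will follow from a bound on the complete sums. Let $S(a,r)=\sum_{\vec x \bmod r}\psi(aF(\vec x)/r)$. We need $|S(a,r)|\le C\,|r|^{n-n/(2(d-1))+\epsilon'}$, or at least with $q^{\deg r\,(n-\kappa)}$ where $\kappa>d+1$. Then $\sum_r\sum_a|r|^{-n}|S(a,r)|$ converges, which gives $\ell_v(F)=1+O(q^{-(\kappa-1)\deg v})$ and the product converges. This cancellation is where smoothness enters.

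\textbf{Minor arcs.} Following Pugin, we interpret $S(\alpha)$ for $\alpha$ in a minor arc as a complete exponential sum over $\F_q$ of a polynomial function on the affine space $P_e^n\cong\A^{n(e+1)}$ (after Weil restriction of coefficients). The phase is $\vec g\mapsto\lambda(F(\vec g))$, where $\lambda$ is a linear functional on $P_{de+c}$ determined by $\alpha$. Katz's bound (via Deligne) gives $|S|\le C\,q^{(n(e+1)+\dim\Sigma_\lambda)/2}$, where $\Sigma_\lambda\subset\A^{n(e+1)}$ is the singular locus of the hypersurface defined by the top-degree part of $\lambda\circ F$. That is, $\Sigma_\lambda$ is the locus where $\lambda(h\,\partial_iF(\vec g))=0$ for all $h\in P_e$ and all $i$. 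The constant $C$ depends only on $d$ and $n$, and here $p>d$ is used. The whole problem reduces to lower-bounding $\codim\Sigma_\lambda$ uniformly enough. We then sum over the minor arcs, each of measure roughly $q^{-(\text{level})}$, and compare with the main term $q^{n(e+1)-de}$.

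\textbf{Codimension estimate (the main obstacle).} We stratify functionals $\lambda$ by their \emph{multiplication rank}. This is the rank of the bilinear pairing $P_a\times P_b\to\F_q$, $(h,k)\mapsto\lambda(hk)$. It is a Hankel-type rank that is small exactly when $\lambda$ is close to a rational $a/r$ with small $\deg r$. So the rank measures the distance of $\alpha$ to a low-height rational, and the strata match the Dirichlet arcs. We aim to show $\codim\Sigma_\lambda\ge \frac{n}{d-1}\,\mathrm{rank}(\lambda)-O(1)$, up to a loss of $\log_q(d-1)$ per unit.

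To prove this we degenerate the Jacobian equations using a weighted $\mathbb{G}_m$-action on the coefficients of $\vec g$, weighting $t^j$ by $j$. Upper semicontinuity of fiber dimension means it suffices to bound the dimension of the limiting (initial-form) system. The initial system decouples into copies of the partial derivatives of $F$ evaluated on successive coefficient vectors, linked through a rank-$m$ Hankel structure. Over the smooth locus, the fact that $\partial_1F,\dots,\partial_nF$ have only the trivial common zero gives roughly $n/(d-1)$ independent conditions for each unit of rank. The constant field must be large, namely $q>(d-1)^2$, so that the $\log_q(d-1)$ loss is absorbed.

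Inserting this into Katz's bound, the minor-arc contribution is at most $q^{n(e+1)-de-c-1}\cdot q^{-\eta e}$ with $\eta$ as in the statement. The $-d$ in $\eta$ comes from the measure of the arcs and the count of $a$. This yields \eqref{eq:mainasy}. I expect the degeneration argument for the codimension bound to be the crux. In particular, controlling the special fiber so that the count of independent Jacobian conditions grows linearly in the multiplication rank will need the most care.
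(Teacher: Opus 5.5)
Your outline has the right architecture: orthogonality, Pugin-style complete sums over $\F_q$, multiplication rank, and a Gr\"obner degeneration of the Jacobian. However, the two inputs that actually produce the stated $\eta$ are missing or misstated.

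\textbf{The codimension estimate does not go through as you describe it.} Give $a_{u,i}$ weight $u$. Then a Jacobian combination $\ell(\partial_iF(\vec g))$ with top $t$-degree $w\le(d-1)e$ has initial form $c\,[t^w](\partial_iF_0)(\vec g(t))=c\sum_{u_1+\cdots+u_{d-1}=w}B_{0,i}(\vec a_{u_1},\ldots,\vec a_{u_{d-1}})$. This involves every block $\vec a_u$ with $u\le w$, so nothing decouples into copies of $\nabla F$ on separate coefficient vectors.

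Moreover, this degeneration lands on $F_0$, the $t^0$-coefficient of $F$. Smoothness of $V(F)$ over $\F_q(t)$ does not make $F_0$ smooth, so the ``trivial common zero'' property is not available where you use it.

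The paper needs three concrete steps here:
\begin{enumerate}
\item Pass to $\ol{\F_q}$ and translate $t\mapsto t+b$ with $V(F(b,\cdot))$ smooth, checking that $r(\alpha)$ and $\codim\Sing P_{\alpha,F}$ are invariant.
\item Row-reduce $\im C_\alpha$ from the top degree down, so that the Jacobian ideal contains $\ell_j(\partial_iF(\vec g))$ with distinct top degrees $w_j$. Then use the Hankel shape of $C_\alpha$ (Lemma \ref{lem:pivotintervals}: the pivots form at most two intervals) to find at least $\mu(\alpha)$ pivots $w_j=(d-1)q_j\le(d-1)e$.
\item Use the weight $e^2+\Lambda u-(e-u)^2-u^2$ with $\Lambda>(d-1)e^2$. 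Its strictly concave correction singles out the balanced term $u_1=\cdots=u_{d-1}=q_j$, giving $\init(\ell_j(\partial_iF(\vec g)))=c_{j,w_j}(\partial_iF_0)(\vec a_{q_j})$ in pairwise disjoint blocks of variables.
\end{enumerate}
The factor $1/(d-1)$ comes from keeping only pivots divisible by $d-1$, not from counting conditions along a Hankel-linked system.

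\textbf{The exponential-sum input is misstated.} There is no bound $|S|\le Cq^{(N+\dim\Sigma_\lambda)/2}$ with $C$ depending only on $d$ and $n$. Here $N=n(e+1)$, and the constants in Katz's estimates are sums of Betti numbers, which grow exponentially in the number of variables. With a general total-Betti-number bound you do not reach the stated $\eta$.

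The $\log_q(d-1)$ in $\eta$ comes from Theorem \ref{thm:Katz}. There, Katz's vanishing of $H^i_c$ for $i>2N-h$ is combined with Wan--Zhang's degree-wise bound $\dim H^{N+j}_c\le(d-1)^{N-j}$. This gives the constant $(d-1)^h/(1-(d-1)/\sqrt q)$, which is exponential only in $h=\codim\Sing P_{\alpha,F}$. Putting a ``$\log_q(d-1)$ loss per unit'' into the codimension bound cannot be right, since codimension is geometric and independent of $q$.

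\textbf{The singular-series bound is stated but not derived.} Convergence needs only $\kappa>2$, not $\kappa>d+1$. The method gives $\kappa=\eta+d$, which need not exceed $d+1$. The paper obtains the bound by writing $S_Z(\beta)=q^{-nr}S(\alpha)$ for $\beta$ primitive on $Z$ of length $r$, with $\alpha$ at degree parameter $r-1$ and $r(\alpha)=r$. Proposition \ref{prop:codimbound} and Theorem \ref{thm:Katz} then apply at every place, including prime-power moduli and $\infty$.

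Relatedly, take the major arcs to be exactly $\deg\alpha\le e$ (your $\theta=1$). Then every minor-arc functional has full rank $e+1$, and $S(\alpha)=q^{n(e+1)}S_Z(\beta)$ is exact on the major arcs. No Poisson summation or separate singular integral is needed, since $\ell_\infty(F)$ appears as just another Euler factor.
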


The dependence on the constant field is explicit: provided that
$n>2d(d-1)$, the condition $\eta>0$ is equivalent to
\[
q>(d-1)^{\frac{2n}{n-2d(d-1)}}.
\] The logarithmic term in $\eta$ comes from the factor $(d-1)^{\codim \Sing(P)}$ in the complete sum estimate below---this is essentially Katz's bounds for vanishing cohomology combined with Wan and Zhang's degree-wise Betti bounds. For example, if $n\ge 4d(d-1)$, then
$q>(d-1)^4$ suffices.

Next, we briefly describe the structure of the proof, which follows the usual shape of the circle method. Orthogonality of characters expresses the normalized counting function as a sum of complete exponential sums $S(\alpha)$, indexed by linear functionals $\alpha$ on $P_{de+c}$. We define $\deg(\alpha)$ to be the minimal length of a finite subscheme of $\P^1$ through which $\alpha$ factors, and introduce a new, related quantity $r(\alpha)$. In particular, we will see in Lemma \ref{lem:rankvsdeg} that\[r(\alpha)=\min(\deg(\alpha), e+1).\] The major arcs are the functionals satisfying $\deg(\alpha)\le e$, and the remaining functionals form the minor arcs. 

The main new ingredient is a lower bound for the codimension of the singular locus of $P_{\alpha,F}(\vec{g})=\alpha(F(\vec{g}))$. Briefly, we translate the parameter so that a suitable constant coefficient specialization of $F$ is smooth, and then we apply row reduction and a weighted degeneration of the Jacobian ideal to establish this bound.

Our complete exponential sum estimate below then gives the required cancellation on the minor arcs. On the major arcs, the minimal supporting subschemes allow us to reindex the character sums as a truncation of the singular series. The same estimate provides absolute convergence, while local Fourier orthogonality combined with the Chinese remainder theorem identifies the singular series with the product of local densities.

Finally, we explain the sharpness of the quadratic dependence by showing the existence of a smooth hypersurface of degree $d$ in $d^2$ variables with no rational point. The proof below combines the classical norm form construction showing the sharpness of the $C_2$ bound with a perturbation argument ensuring
smoothness. The latter argument was suggested by ChatGPT 5.6 Sol.

\begin{lem}\label{lem:quad-sharp} Let $d\ge 2$ and suppose $\ch(\F_q)\nmid d$. Then, there is a smooth degree $d$ hypersurface $X\subset \P_{\F_q(t)}^{d^2-1}$ with no $\F_q(t)$-point.
\end{lem}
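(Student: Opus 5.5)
The plan is to combine the classical norm-form construction with a perturbation of high $t$-adic order. The perturbation does not affect anisotropy, and it can be chosen generically to make the hypersurface smooth.

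\textbf{Step 1 (anisotropic form over $\F_q$).} Let $N(y_1,\ldots,y_d)=N_{\F_{q^d}/\F_q}(y_1\omega_1+\cdots+y_d\omega_d)$ for an $\F_q$-basis $\omega_1,\ldots,\omega_d$ of $\F_{q^d}$. This is a degree $d$ form with coefficients in $\F_q$. It satisfies $N(y)=0$ for $y\in\F_q^d$ only if $y=0$.

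\textbf{Step 2 (anisotropic form over $\F_q(t)$).} Set
\[
\Phi(x_0,\ldots,x_{d-1})=\sum_{i=0}^{d-1}t^iN(x_i),
\]
where each $x_i$ is a block of $d$ variables, giving $d^2$ variables in total. Fix any $G\in\F_q[t][x]$ of degree $d$ and put $\Psi_s=\Phi+t^d s\,G$ with $s\in\F_q[t]$.

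We show that $\Psi_s$ has no nontrivial zero over $\F_q(t)$. Suppose $x\neq 0$ is a zero; scale it so that it is primitive in $\F_q[t]$, i.e. all coordinates are polynomials and their minimum $t$-adic valuation $v$ is $0$. For each block, write $x_i=t^{m_i}x_i'$ with $x_i'\not\equiv0\bmod t$. Reducing mod $t$ and using Step 1 gives $v(N(x_i'))=0$, so $v(t^iN(x_i))=i+dm_i$. These valuations lie in distinct residue classes mod $d$, so they are pairwise distinct.

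Hence $v(\Phi(x))=\min_i(i+dm_i)$. Some $m_i=0$ by primitivity, so this minimum is at most $d-1$. On the other hand, $v(t^dsG(x))\ge d$. Therefore $v(\Psi_s(x))\le d-1$, so $\Psi_s(x)\neq0$, a contradiction. So every $\Psi_s$ is anisotropic, whatever the choice of $G$ and $s$.

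\textbf{Step 3 (smoothness).} Take $G=\sum_j x_j^d$, the Fermat form in all $d^2$ variables. It is smooth since $\ch(\F_q)\nmid d$. Consider the pencil $\lambda\Phi+\mu G$ over $\overline{\F_q(t)}$, parametrized by $[\lambda:\mu]\in\P^1$.

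The members that are singular form a closed subset of $\P^1$. This is the image of the closed incidence variety $\{([\lambda:\mu],x)\colon \nabla(\lambda\Phi+\mu G)(x)=0\}$ under the proper projection, or equivalently the vanishing locus of the discriminant restricted to the pencil. This subset does not contain $[0:1]$, since $G$ is smooth, so it is a finite set of points. Consequently only finitely many $c\in\overline{\F_q(t)}$ make $\Phi+cG$ singular.

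The elements $c=t^ds$ with $s\in\F_q[t]$ are infinitely many, so we can choose $s$ avoiding the bad values. Then $X=V(\Psi_s)\subset\P^{d^2-1}_{\F_q(t)}$ is smooth of degree $d$, and by Step 2 it has no $\F_q(t)$-point.

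The main point requiring care is Step 3. The norm form $\Phi$ itself is highly singular, since it is a product of conjugate linear forms in each block, so smoothness must come entirely from the perturbation. The argument works because the anisotropy in Step 2 is insensitive to any perturbation divisible by $t^d$, which leaves an infinite family of admissible perturbations. Beyond that, the only checks needed are that the singular members of the pencil are finite in number, which follows from smoothness of the Fermat member, and that the Fermat member is indeed smooth because $p\nmid d$.
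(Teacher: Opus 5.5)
Your proof is correct. It uses the same ingredients as the paper, namely the norm form $\sum_j t^jN(\vec x_j)$ plus a smoothing perturbation, but it handles the perturbation explicitly rather than topologically.

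The paper argues softly over $K_v=\F_q(\!(t)\!)$:
\begin{enumerate}
\item anisotropy of $F_0$ is a $t$-adically open condition, by compactness of the unit sphere in $\F_q[\![t]\!]^{d^2}$;
\item smooth forms are a non-empty Zariski open set, hence $t$-adically dense;
\item density of $K$ in $K_v$ then produces a smooth anisotropic form over $K$.
\end{enumerate}

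You instead make the openness quantitative. On primitive vectors $v(\Phi(x))\le d-1$, while any perturbation $t^dsG$ with integral coefficients has valuation at least $d$, so the perturbation never changes $v(\Phi(x))$. You then only need smoothness along the single pencil $\Phi+cG$. There, finiteness of the singular members follows from smoothness of the Fermat member, and infinitely many admissible values $c=t^ds$ remain available.

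What your route buys:
\begin{itemize}
\item It avoids the compactness argument, the density of smooth forms in the full space of forms, and the approximation of $K_v$-forms by $K$-forms.
\item It yields an explicit form with coefficients in $\F_q[t]$.
\item Your valuation argument works verbatim for primitive $x\in\F_q[\![t]\!]^{d^2}$, so you get the same local obstruction at $t=0$ as the paper.
\end{itemize}

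What the paper's route buys: its argument applies to any anisotropic starting form, not only one with this valuation gap.

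One small bookkeeping point you skip: a block $x_i$ may vanish. Set $m_i=\infty$ there, and the argument is unchanged.
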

\begin{proof}
    Let $K=\F_q(t)$ and $K_v=\F_q(\!(t)\!)$. Also let $N$ be the norm form of $\F_{q^d}/\F_q$. Divide the $d^2$ variables into $d$ blocks $\vec x_0,\ldots, \vec x_{d-1}$, and set \[F_0 = \sum_{j=0}^{d-1}t^jN(\vec x_j).\] Observe that this form is anisotropic over $K_v$ since the valuations of its non-zero summands are pairwise distinct mod $d$: Choose an $\F_q$-basis of $\F_{q^d}$. After base change to $K_v$, the norm form becomes the norm for the unramified degree $d$ extension $\F_{q^d}(\!(t)\!)/K_v.$ Thus, if $\vec x_j\ne 0$, then $v_t(N(\vec x_j))=d\,v_L(\vec x_j),$ so $v_t(t^jN(\vec x_j))= j\bmod d.$ The non-zero summands of $F_0$ therefore have pairwise distinct valuations, so the one with smallest valuation cannot cancel with the others. 

    Now, consider the vector space of degree $d$ forms in $d^2$ variables over $K_v$. Equip it with the $t$-adic topology. Then the anisotropic forms comprise a $t$-adically open subset: $|F_0|$ has a positive minimum on the compact set $\{\vec x\in \F_q[\![t]\!]^{d^2}\colon \max_i |x_i|=1\}$, so any other degree $d$ form whose coefficients are sufficiently close to those of $F_0$ will remain anisotropic by the non-archimedean triangle inequality.

    On the other hand, smooth forms constitute a non-empty Zariski open subset of the same space of forms because the Fermat hypersurface is smooth when $\ch(\F_q)\nmid d$. Since $K_v$ is infinite, this subset is $t$-adically dense. Then, we may choose a smooth form $G$ in the open neighborhood of $F_0$ consisting of anisotropic forms constructed above. Both smoothness and anisotropy are $t$-adically open conditions, so there is a $t$-adic neighborhood of $G$ consisting of forms having both properties. Since $K$ is dense in $K_v$, this neighborhood contains a form $F$ defined over $K$. Now, let $X$ be cut out by $F$, and the desired claim follows.
\end{proof}
\begin{ack}
    I'd like to thank Tim Browning, Jordan Ellenberg, Jakob Glas, Will Sawin, and Victor Wang for their interest and helpful comments. I'm especially grateful to Will for suggesting that the distribution of pivot degrees could be used to sharpen the numerical constant in the main theorem, to Jakob for sharing with me some very enlightening follow-up applications, and to Tim and Victor for helping me contextualize the results properly (see Remark \ref{rem:context} for example).
    
    To be transparent about AI use, ChatGPT 5.6 Sol was used to proofread and streamline many of the arguments and to suggest simplifications in the proofs of Lemma \ref{lem:quad-sharp}, Proposition \ref{prop:codimbound}, and Section \ref{sec: compute_sing_series}. The main new ideas of the paper, including the use of multiplication rank to control the relevant singular locus and the weighted degeneration of the Jacobian equations, were developed by the author before this AI-assisted revision. All AI-assisted suggestions were independently checked by the author.
\end{ack}
\section{Major/minor arcs and rank}
In this section, we introduce the multiplication rank of a functional, an improved version of Katz's complete exponential bounds, and the major/minor arc division. 

Fix a non-trivial additive character $\psi\colon\F_{q}\to\C^{\times}$.
Let $\alpha\in P_{de+c}^{\vee}$ and define 
\[
S(\alpha)=\sum_{\vec{g}\in P_{e}^{n}}\psi(\alpha(F(\vec{g}))).
\]
By orthogonality of characters, we have 
\[
N_{e}(F)=q^{-(de+c+1)}\sum_{\alpha}S(\alpha).
\]
We say $\alpha\sim Z$ for $Z\subset\P^{1}$ a finite subscheme if
$\alpha$ factors through the restriction map $P_{de+c}\to H^{0}(Z,\O_{Z}((de+c)\infty))$.
Define $\deg(\alpha)$ to be the minimum over all $\deg(Z)$ for which
$\alpha\sim Z$. See \cite{haseliu2024higher} for more details on this and the connection with Dirichlet approximation.

Call a functional $\beta\colon H^{0}(Z,\O_{Z}((de+c)\infty))\to\F_{q}$
\emph{primitive on $Z$} if it does not factor through a proper closed
subscheme of $Z$.

\begin{lem}
\label{lem:minimalsupport}Suppose that $\alpha\in P_{de+c}^{\vee}$
factors through finite subschemes $Z_{1},Z_{2}\subset\P^{1}$ of degree
at most $e$. Then $\alpha$ factors through $Z_{1}\cap Z_{2}$.
Consequently, every $\alpha$ with $\deg(\alpha)\le e$ has a unique
minimal supporting subscheme, and composition with the restriction map
gives a bijection
\[
\{\alpha\in P_{de+c}^{\vee}\colon\deg(\alpha)\le e\}
\longleftrightarrow
\{(Z,\beta)\colon\deg Z\le e,\ \beta\text{ primitive on }Z\}.
\]
\begin{proof}
This follows from the arguments of
\cite[Lemma 13]{haseliu2024higher} and \cite[Remark 14]{haseliu2024higher}.
\end{proof}
\end{lem}

We call the following quantity the \emph{multiplication rank} of $\alpha$:
\begin{defn}
Let $C_{\alpha}\colon P_{e}\to P_{(d-1)e+c}^{\vee}$ be the map $C_{\alpha}(h)(u)=\alpha(hu)$
and $r(\alpha)$ the rank of $C_{\alpha}$ (as a linear map). 
\end{defn}

\begin{lem}
\label{lem:rankvsdeg}For $\alpha\in P_{de+c}^{\vee}$, we have $r(\alpha)=\min(\deg(\alpha),e+1)$. 
\begin{proof}
For $\deg(\alpha)\le e+1$: Let $\alpha\sim Z$ with $\deg(Z)=\deg(\alpha)$.
Let $\ol{\alpha}$ be the induced functional $H^{0}(Z,\O_{Z}((de+c)\infty))\to\F_{q}$
($\alpha$ is $\ol{\alpha}$ composed with $P_{de+c}\to H^{0}(Z,\O_{Z}((de+c)\infty))$).

Then, consider the surjective maps $P_{e}\to H^{0}(Z,\O_Z(e\infty))$
and $P_{(d-1)e+c}\to H^{0}(Z,\O_Z(((d-1)e+c)\infty))$. The pairing
$H^{0}(Z,\O_{Z}(e\infty))\times H^{0}(Z,\O_{Z}(((d-1)e+c)\infty))\to\F_{q}$
induced by $\ol{\alpha}$ induces a map $H^{0}(Z,\O_{Z}(e\infty))\to H^{0}(Z,\O_{Z}(((d-1)e+c)\infty))^{\vee}$. 

Then $C_{\alpha}\colon P_{e}\to P_{(d-1)e+c}^{\vee}$ can be written
as 
\[
P_{e}\to H^{0}(Z,\O_{Z}(e\infty))\to H^{0}(Z,\O_{Z}(((d-1)e+c)\infty))^{\vee}\to P_{(d-1)e+c}^{\vee},
\]
and clearly it suffices to show the middle map has rank $\deg(\alpha)$. 

To see this, let $A=H^{0}(Z,\O_{Z})$ so that the middle pairing can
be expressed as $A\times A\to\F_{q}$, where $(a,a')$ is mapped to
$\lambda(aa')$ for some functional $\lambda$. Suppose there is some
non-zero $a\in A$ so that $\lambda(aa')=0$ for all $a'$. Then,
$\lambda$ kills the ideal $(a)$ so that it factors through $A/(a)$,
i.e. $\alpha$ factors through the smaller closed subscheme defined
by $a$ inside $Z$, which is a contradiction. That means $A\to A^{\vee}$
is injective, so it has full rank.

For $\deg(\alpha)>e+1$: Suppose that $\ker C_{\alpha}$ is non-zero
and take a non-zero $h$ in it. Viewed as a section of $\O(e\infty)$,
$h$ has a zero divisor $V(h)$ of length $e$. The exact sequence
$0\to H^{0}(\P^{1},\O((de+c)\infty-V(h)))\to H^{0}(\P^{1},\O((de+c)\infty))\to H^{0}(V(h),\O((de+c)\infty))\to0$
implies that $\alpha$ factors through $V(h)$, which means $\deg(\alpha)\le e$,
which is a contradiction. So $\ker C_{\alpha}$ is zero, which means
$r(\alpha)=e+1$. 
\end{proof}
\end{lem}

We define the major arcs to be $\deg\alpha\le e$ (instead of $e+1$ as done in \cite{haseliu2024higher}).
Then, we decompose our normalized counting function as follows:
\[
q^{(de+c+1)-n(e+1)}N_{e}(F)=q^{-n(e+1)}\sum_{\deg\alpha\le e}S(\alpha)+q^{-n(e+1)}\sum_{\deg\alpha>e}S(\alpha).
\]

\begin{rem}\label{rem:context}
To relate the preceding definition of the major and minor arcs to the
usual rational approximation viewpoint in the classical circle method,
we explain how $\deg(\alpha)$ simultaneously measures the denominator
and the error of a rational approximation.

Let $D=de+c$ and associate to $\alpha\in P_D^\vee$ the truncated
Laurent series
\[\theta_\alpha=\sum_{j=0}^{D}\alpha(t^j)t^{-j-1},\]
where $\F_q(\!(t^{-1})\!)$ is equipped with the absolute value determined
by $|t|=q$. A finite subscheme $Z\subset\P^1$ has an affine part cut out by a monic polynomial $Q$ and a part of some length $m$ supported
at $\infty$. If $R=\deg Q$ and $s=R+m$, then the space of sections in $P_D$ vanishing on $Z$ is $QP_{D-s}$.

If $A$ is the polynomial part of $Q\theta_\alpha$, the coefficient of
$t^{-j-1}$ in $Q\theta_\alpha-A$ is $\alpha(Qt^j)$. Hence
$\alpha\sim Z$ is equivalent to $|Q\theta_\alpha-A|\le q^{-(D-s+2)}.$ It follows that a supporting subscheme of length $s$ gives
\[\max(|Q|,q^{D+2}|Q\theta_\alpha-A|)\le q^s.\]
Conversely, if this inequality holds, then $\deg Q\le s$, and taking
the part at $\infty$ to have length $s-\deg Q$ produces a degree $s$
subscheme through which $\alpha$ factors. Therefore, after cancelling
common factors,
\[q^{\deg(\alpha)}=\min_{\substack{A,Q\in\F_q[t]\\
Q \text{ monic, }\gcd(A,Q)=1}}\max(|Q|,q^{D+2}|Q\theta_\alpha-A|).\]
For comparison,
let $\vartheta\in\mathbb{R}/\mathbb{Z}$ be a classical frequency, and
suppose that the variables in a degree $d$ exponential sum have size
at most $B$. A reduced rational approximation $a/r$ to $\vartheta$ has
denominator $r$ and scaled error $B^d|r\vartheta-a|$. The corresponding
classical quantity is
\[\min_{\substack{a\in\mathbb{Z},\ r\ge 1\\\gcd(a,r)=1}}
\max(r,B^d|r\vartheta-a|).\]
Replacing the maximum by a sum changes this quantity by at most a factor of $2$, and the resulting expression appears in \cite[Theorem 4.1]{Vaughan1997}. Since $B=q^e$ and $q^{D+2}=q^{c+2}B^d$, this agrees, up to the fixed factor $q^{c+2}$, with the function field expression above.

The multiplication kernel also has a direct classical analogue. Define
\[K_e(\theta_\alpha)=\{h\in P_e\colon |h\theta_\alpha-A|\le q^{-(D-e+2)}\text{ for some }A\in\F_q[t]\}.\]
For $0\le j\le D-e$, the coefficient of $t^{-j-1}$ in
$h\theta_\alpha-A$ is $\alpha(ht^j)$. Consequently,
\[K_e(\theta_\alpha)=\ker C_\alpha\] and 
\[r(\alpha)=\operatorname{codim}_{P_e}K_e(\theta_\alpha).\]For comparison, let $\vartheta\in\R/\Z$ and define
\[K_B(\vartheta)=\{h\in\Z\colon|h|\le B,\ 
\|h\vartheta\|\le B^{1-d}\}.\]
The relevant geometric series estimate in the final step of Weyl
differencing is
\[\left|\sum_{|y|\le B^{d-1}}e(\vartheta hy)\right|\ll\min(B^{d-1},\|h\vartheta\|^{-1})\]
(see \cite[Lemmas 2.2--2.4]{Vaughan1997}). Thus
$K_B(\vartheta)$ is the range in which this estimate gives no saving
over the trivial bound. 

Over $\F_q(t)$, these approximation conditions are exact
linear equations, so $K_e(\theta_\alpha)$ is a vector space and
\[q^{r(\alpha)}=\frac{\#P_e}{\#K_e(\theta_\alpha)}.\]
So multiplication rank can be thought of as the function field counterpart of
the reciprocal density of $K_B(\vartheta)$. Over the integers,
$K_B(\vartheta)$ need not have any linear structure, so its cardinality
replaces the dimension of a kernel.
\end{rem}


Finally, we record the complete sum estimate that we will use, which improves upon the one used by Sawin. The cohomological vanishing is due to Katz, while the degree-by-degree Betti number estimates are due to Wan and Zhang.
\begin{thm}
\label{thm:Katz}Let $N\ge 1$, $\psi$ be a non-trivial additive character of $\F_q$, and $P\in \F_q[x_1,\ldots, x_N]$ have degree $d\ge 2$ with $p\nmid d$. Write $P_d$ for the homogeneous part of $P$ of degree $d$, and let $\Sing(P_d)$ denote the closed subscheme cut out by the first partial derivatives of $P_d$. Set $h=\codim_{\A^N}\Sing(P_d)$.

If $q>(d-1)^2$, then \[\left|\sum_{x\in\F_q^N}\psi(P(\vec x))\right|\le \frac{(d-1)^h}{1-(d-1)/\sqrt q}q^{N-h/2}.\]
\end{thm}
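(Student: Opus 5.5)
Write $S=\sum_{x\in\F_q^N}\psi(P(\vec x))$. The plan is to bound the number of eigenvalues of Frobenius, by weight, on the compactly supported cohomology of the Artin--Schreier sheaf $\mathcal{L}_\psi(P)$ on $\A^N_{\F_q}$, and then sum. By the Grothendieck--Lefschetz trace formula,
\[S=\sum_{i}(-1)^i\Tr\bigl(\Fr\mid H^i_c(\A^N_{\ol{\F}_q},\mathcal{L}_\psi(P))\bigr),\]
so it suffices to control which $H^i_c$ are non-zero, their weights, and their dimensions.

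\textbf{Step 1 (Katz vanishing).} I would invoke Katz's theorem on sums with singular leading form (Katz, ``Estimates for singular exponential sums''). If $p\nmid d$ and $\dim\Sing(P_d)=\epsilon$ (as a subscheme of $\A^N$, so $\epsilon=N-h$), then $H^i_c(\A^N,\mathcal{L}_\psi(P))=0$ for $i<N$. Moreover, for $N\le i\le N+\epsilon$ the group $H^i_c$ is mixed of weights $\le i$, and it vanishes for $i>N+\epsilon$. The proof goes by homogenizing $P$, treating the hyperplane at infinity, and using the fact that the singular locus of $P_d$ controls the failure of Deligne's purity.

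\textbf{Step 2 (Wan--Zhang Betti bounds).} Next I would use the degree-wise bounds of Wan and Zhang: $\dim H^{N+j}_c(\A^N,\mathcal{L}_\psi(P))\le (d-1)^{N-j}$ for $0\le j\le\epsilon$. Equivalently, one can use a bound of the form $\binom{\cdot}{\cdot}$ that is dominated by this quantity. This is the ingredient that improves on the total-Betti bound used by Sawin. I would quote it in the form of a bound for each $H^{N+j}_c$ by $(d-1)^{N-j}$.

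\textbf{Step 3 (summing).} Each eigenvalue on $H^{N+j}_c$ has absolute value at most $q^{(N+j)/2}$. Therefore
\[|S|\le \sum_{j=0}^{N-h}(d-1)^{N-j}q^{(N+j)/2}.\]
Reindex by $k=N-h-j$, so that the $k=0$ term is $(d-1)^h q^{N-h/2}$. Consecutive terms change by a factor of $(d-1)/\sqrt q$, so the sum is bounded by the geometric series
\[(d-1)^hq^{N-h/2}\sum_{k\ge0}\left(\frac{d-1}{\sqrt q}\right)^k,\]
which converges because $q>(d-1)^2$. This gives the stated bound.

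The main obstacle is getting Step 2 in exactly the needed shape. Wan--Zhang's results are stated for toric or affine sums with a degree-by-degree Betti bound, and I need to check that their bound gives $(d-1)^{N-j}$ in degree $N+j$ with no extra combinatorial factor. If it does not, I would fall back on Katz's argument: take hyperplane sections to reduce to the case $\epsilon=0$, where the Betti number is exactly $(d-1)^N$ (the Deligne case). Then track via weak Lefschetz how each cut lowers $N$ and shifts the degree.
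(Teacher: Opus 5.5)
Your proposal is correct and follows essentially the same route as the paper. It uses the trace formula, Katz's vanishing for $i>2N-h$ together with Artin vanishing for $i<N$, and the Wan--Zhang bound $\dim H^{N+j}_c\le (d-1)^{N-j}$, which the paper takes from Wan--Zhang's Corollary 5.3.3 and its proof, so your hyperplane-section fallback is not needed; it then applies Deligne's weights and sums the resulting geometric series. The only difference is that the paper rederives Katz's vanishing via the pencil $\overline{P}=\lambda X_0^d$ and Katz's Corollaries 22 and 14(1), whereas you cite the packaged affine statement directly.
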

\begin{proof}
    Let $\mathcal{L}_\psi$ be the Artin--Schreier sheaf associated with $\psi$. The Grothendieck--Lefschetz trace formula gives \[\sum_{\vec x\in\F_q^N}\psi(P(\vec x))=\sum_i(-1)^i\Tr(\Fr_q \mid H_c^i(\A_{\overline{\F}_q}^N,P^*\mathcal{L}_\psi)).\] We first recall the range in which these cohomology groups vanish. Let $\overline{P}$ be the degree $d$ homogenization of $P$ so that \[\ol{P}(X_0,\ldots, X_N)\coloneqq X_0^dP(X_1/X_0,\ldots, X_N/X_0).\] Consider the pencil \[\mathcal{Y}\coloneqq \{\ol{P}(X_0,\ldots, X_N)=\lambda X_0^d\}\subset \P^N\times \A^1.\] and its projection $\pi\colon \mathcal{Y}\to \A^1.$ Note that $\mathcal{Y}\cap \{X_0\ne 0\}$ is the graph of $P\colon \A^N\to \A^1$, and its complement is $\{X_0=P_d(X_1,\ldots, X_N)=0\}\times \A^1$. Moreover, $\pi$ restricted to this complement is simply the projection onto $\A^1$. Then, since $R\Gamma_c(\A^1,\mathcal{L}_\psi)=0$, the excision sequence and the K\"unneth formula give us 
    \begin{equation}\label{eq:complinfty}
        R\Gamma_c(\mathcal{Y},\pi^*\mathcal{L}_\psi)\cong R\Gamma_c(\A_{\overline{\F}_q}^N,P^*\mathcal{L}_\psi).
    \end{equation} Note that the singular locus of $P_d=0$ in $\P^{N-1}$ has dimension $\delta = N-h-1$. 
    To apply Katz's results, take $X=\P^N$, $L=X_0$, $H=\overline P$, and $G=X_0^{d-1}$ in his notation.  Then his family $\widetilde f$ is precisely $\pi\colon \mathcal Y\to\A^1,$ with $\overline P=\lambda X_0^d.$ Moreover, $X\cap\{L=0\}\cap\{H=0\}=\{P_d=0\}\subset\P^{N-1},$ whose singular locus has dimension $\delta$.  On the other hand, $X\cap\{L=0\}=\P^{N-1}$ is smooth, so the quantity denoted by $\varepsilon$ in \cite{katz1999singular} is $-1$.  In particular, $\varepsilon\leq\delta$. 
    
    It follows from \cite[Corollary 22]{katz1999singular} that $R^b\pi_*\overline{\Q}_\ell$ is tame at infinity for every $b\geq N+\delta$. Since $\pi$ is proper, the compactly supported Leray spectral sequence and the projection formula give \[E_2^{a,b}=H_c^a(\A^1_{\overline{\F}_q},\mathcal L_\psi\otimes R^b\pi_*\overline{\Q}_\ell)\Rightarrow H_c^{a+b}(
   \mathcal Y_{\overline{\F}_q},
   \pi^*\mathcal L_\psi).\] Using \cite[Corollary 14(1)]{katz1999singular} therefore gives \[E_2^{a,b}=0\text{ for }a+b\ge N+\delta+2.\]Consequently,\[H_c^i(
   \mathcal Y_{\overline{\F}_q},\pi^*\mathcal L_\psi)=0 \text{ for }i\ge N+\delta+2.\]Since $N+\delta+2=2N-h+1,$ combining this with \eqref{eq:complinfty} gives\[H_c^i(\A^N_{\overline{\F}_q},P^*\mathcal L_\psi)=0\text{ for }i>2N-h.\] On the other hand, note that $P^*\mathcal{L}_\psi[N]$ is perverse on $\A^N$, so Artin vanishing gives vanishing for $i<N$, i.e. these cohomology groups are non-zero only for \[N\le i \le 2N-h.\] Now, for $0\le j\le N$, \cite[Corollary 5.3.3]{WanZhang} and its proof gives \[\dim H_c^{N+j}(\A^N_{\overline{\F}_q},P^*\mathcal L_\psi)\le (d-1)^{N-j}.\]Finally, Deligne's theory of weights gives \[\left|\Tr(\Fr_q \mid H_c^{N+j}(\A_{\overline{\F}_q}^N,P^*\mathcal{L}_\psi))\right|\le (d-1)^{N-j}q^{(N+j)/2}.\] Combining this with the trace formula equality at the beginning and the cohomological non-vanishing range, as well as setting $j'=N-h-j$, we obtain 
   \begin{align*}
       \left|\sum_{x\in\F_q^N}\psi(P(\vec x))\right|&\le \sum_{j=0}^{N-h}(d-1)^{N-j}q^{(N+j)/2}\\&=(d-1)^hq^{N-h/2}\sum_{j'=0}^{N-h}\left(\frac{d-1}{\sqrt q}\right)^{j'}\\&\le \frac{(d-1)^h}{1-(d-1)/\sqrt q}q^{N-h/2},
   \end{align*} as desired.
\end{proof}

\section{Preliminary lemmas}\label{sec:prelim}

Let $R=k[x_{1},\ldots,x_{n}]$ only in this section and give each
$x_{i}$ an integer weight $w_{i}$. Then, for any polynomial $f$, denote
by $\init_{w}(f)$ the sum of monomial terms of maximal weight. Recall
that the initial ideal (with respect to these weights) of an ideal
$I\subset R$ is the ideal 
\[
\init_{w}(I)=(\init_{w}(f)\colon f\in I).
\]

\begin{lem}
\label{lem:initialideal}Let $I\subset R$ be a homogeneous ideal
with respect to the usual grading on $R$. Then, 
\[
\height(I)=\height(\init_{w}(I)).
\]
\end{lem}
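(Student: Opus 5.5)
The plan is to realize $\init_w(I)$ as the special fiber of a flat one-parameter family whose general fiber is isomorphic to $I$, and then compare Hilbert functions. Homogeneity is what makes height computable from Hilbert functions in the usual grading.

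\textbf{Reduction to nonnegative weights.} Because $I$ is homogeneous, replacing $w$ by $w+m(1,\ldots,1)$ does not change $\init_w(I)$. Indeed, on a homogeneous polynomial of degree $D$, the shifted weight of each monomial is the old weight plus $mD$. Every element of $I$ splits into homogeneous components lying in $I$. The initial form of a sum is a sum of initial forms of some of its homogeneous components, and $\init_w(I)$ is generated by the initial forms of homogeneous elements of $I$. So I may assume $w_i\ge 0$. The same decomposition shows that $\init_w(I)$ is homogeneous for the standard grading.

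\textbf{Flat degeneration.} Introduce a parameter $s$. For $f\in R$ with maximal weight $\mu(f)$, define
\[
\tilde f(s,x)=s^{\mu(f)}f(s^{-w_1}x_1,\ldots,s^{-w_n}x_n)\in k[s,x].
\]
Let $\tilde I\subset k[s,x]$ be the ideal generated by all $\tilde f$ with $f\in I$. The standard Gröbner degeneration facts then give the following. The ring $k[s,x]/\tilde I$ is torsion-free, hence flat, over $k[s]$. Its fiber at $s=0$ is $R/\init_w(I)$. Its fiber at $s=\lambda\ne0$ is isomorphic to $R/I$ via the torus rescaling $x_i\mapsto \lambda^{-w_i}x_i$.

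Torsion-freeness is the step needing care, and I expect it to be the main obstacle. I will prove it as follows. If $s g\in\tilde I$, write $sg=\sum a_j\tilde f_j$. Set $s=1$ to obtain $g|_{s=1}\in I$. Then check that $g$ is $s^{-k}$ times the homogenization of $g|_{s=1}$, so $g\in\tilde I$.

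An alternative that avoids flatness language is linear algebra degree by degree. For each standard degree $D$, compare $\dim_k I_D$ with $\dim_k \init_w(I)_D$. The map $f\mapsto\init_w(f)$ sends the weight filtration of $I_D$ to the associated graded space, and that graded space is exactly $\init_w(I)_D$. Since an associated graded of a finite-dimensional filtered space has the same dimension, the two dimensions agree. The only point to verify is that $\init_w(I)_D$ equals the span of $\{\init_w(f): f\in I_D\}$, which follows from the homogeneity argument above. I would actually use this route.

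\textbf{Conclusion via Hilbert polynomials.} The equality of dimensions shows that $R/I$ and $R/\init_w(I)$ have the same Hilbert function. Both ideals are homogeneous, so the Krull dimension of each quotient is one more than the degree of its Hilbert polynomial, with the convention for the zero polynomial in the irrelevant case. Hence
\[
\dim R/I=\dim R/\init_w(I).
\]
In the polynomial ring $R$, height satisfies $\height J=n-\dim R/J$. This gives $\height(I)=\height(\init_w(I))$.
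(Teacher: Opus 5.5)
Your proof is correct, but the argument you actually commit to differs from the paper's. The paper cites Eisenbud's Gr\"obner degeneration (Theorem 15.17): $R/I$ and $R/\init_w(I)$ are the generic and special fibres of a flat family over $\A^1$, and hence have the same dimension. Your first step builds that same family, but you then switch to a degree-by-degree comparison. The weight filtration on $I_D$ has associated graded isomorphic to the span of $\{\init_w(f)\colon f\in I_D\}$, and this span is $\init_w(I)_D$. For that last identification, state explicitly that $x^\beta\init_w(g)=\init_w(x^\beta g)$: this is what keeps the degree-$D$ part of the ideal generated by initial forms of homogeneous elements inside that span. So $R/I$ and $R/\init_w(I)$ have the same Hilbert function, hence the same Krull dimension.

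Your route is elementary and self-contained, and it proves the stronger fact that the Hilbert functions coincide. It also shows exactly where homogeneity is used. This matters because flatness over $\A^1$ alone does not force equal fibre dimensions. For $I=(x-1)\subset k[x]$ with weight $w=-1$, the family is $k[s,x]/(sx-1)$, which is flat over $k[s]$, yet $\init_w(I)=R$ and the special fibre is empty. The paper's ``hence'' tacitly relies on the standard grading of the family: each graded piece is a free $k[s]$-module of constant rank, which amounts to your Hilbert function equality. What the paper's route buys is brevity and the geometric picture.

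The reduction to nonnegative weights and the torsion-freeness sketch are not needed for the route you use. If you kept them, note one fix: $g$ equals a power of $s$ times the homogenization of $g|_{s=1}$ only after you first reduce to $g$ homogeneous for the grading $\deg s=1$, $\deg x_i=w_i$, for which $\tilde I$ is homogeneous.
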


\begin{proof}
This is \cite[Theorem 15.17]{eisenbud1995}: the point is that
$R/I$ and $R/\init_{w}(I)$ are respectively the generic and special
fibers of a suitable flat family over $\A^{1}$ and hence have the same
dimension.
\end{proof}

The multiplication matrices considered below have a special shifted form.
The following elementary observation will allow us to locate enough of their
pivot degrees.

\begin{lem}
\label{lem:pivotintervals}Let $E\ge1$ and $M\ge0$, and let
\[A=(b_{i+j})_{0\le i\le E-1,\,0\le j\le M}\]
be a matrix over any field. Perform Gaussian elimination on its row space
using the columns in the order $M,M-1,\ldots,0$. Then, the set of pivot
columns is a union of at most two intervals of integers.
\end{lem}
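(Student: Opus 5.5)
The plan is to translate the Gaussian elimination into a statement about columns and then use the shift-invariance of the Hankel structure to propagate linear relations. Write $c_j=(b_j,b_{j+1},\ldots,b_{j+E-1})^T$ for the $j$-th column. Eliminating in the column order $M,M-1,\ldots,0$ means that $j$ is a pivot column exactly when $c_j\notin W_{j+1}\coloneqq\operatorname{span}(c_{j+1},\ldots,c_M)$. So it suffices to show that the set $\{j\colon c_j\notin W_{j+1}\}$ is a union of at most two intervals.

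\textbf{Step 1: the top interval.} If every column is a pivot there is nothing to prove. Otherwise, let $s$ be the largest non-pivot index. Then $[s+1,M]$ consists entirely of pivots, which gives the first interval. Moreover there is a relation
\[
c_s=\sum_{k=s+1}^M a_k c_k,\qquad\text{i.e.}\qquad b_{i+s}=\sum_{k>s}a_k b_{i+k}\quad(0\le i\le E-1).
\]

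\textbf{Step 2: propagate the relation down.} Define the sequence
\[
\beta_m=b_m-\sum_{k>s}a_k b_{m+k-s}\quad(m\le s),
\]
so that $\beta_s=\beta_{s+1}=\cdots=\beta_{s+E-1}=0$ by the relation above. For $j\le s$, put
\[
u_j=c_j-\sum_{k>s}a_k c_{j+k-s}.
\]
The columns $c_{j+k-s}$ appearing here have indices in $[j+1,\,j+M-s]$, so they lie in $W_{j+1}$, and $u_j\equiv c_j \pmod{W_{j+1}}$. The $i$-th entry of $u_j$ is $\beta_{i+j}$, which vanishes when $i\ge s-j$. Hence $u_j=(\beta_j,\ldots,\beta_{s-1},0,\ldots,0)$ truncated to length $E$, and $j$ is a pivot if and only if $u_j\notin W_{j+1}$.

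\textbf{Step 3: identify the lower interval.} Let $m<s$ be the largest index with $\beta_m\ne0$; if none exists, all $u_j=0$ and there are no further pivots. For $m<j\le s$ we get $u_j=0$, so these $j$ are non-pivots. For $j\le m$ with $m-j\le E-1$, the last nonzero entry of $u_j$ sits in row $m-j$, and this row strictly increases as $j$ decreases. The goal is to show that the pivots below $s$ form exactly an interval $[j_0,m]$, where $j_0=\max(0,m-E+1)$ or $j_0$ is cut off earlier once the rank saturates. Once the rank reaches its maximum possible value, every remaining column lies in the span and no further pivots can occur. Together with Step 1 this yields at most two intervals.

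\textbf{Main obstacle.} The delicate point is Step 3: showing that $u_j\notin W_{j+1}$ for every $j$ in the claimed range. The triangular shape of $u_j$ makes it independent of $u_{j+1},\ldots,u_m$. However, $W_{j+1}$ also contains the columns $c_{s+1},\ldots,c_M$, which are not triangular. The first approach is to pass to the quotient by $W_{s+1}$ and argue that the images of $u_j$ stay triangular in a suitable adapted basis of rows. Failing that, iterate the argument of Step 2: take the minimal recurrence satisfied by the whole sequence $(b_i)$ on the relevant window, in the style of Berlekamp--Massey. The standard fact that a finite sequence has a unique minimal linear recurrence up to a length threshold should force the pivot set below $s$ to be contiguous. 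The extremal case is where the rank saturates at $E$, and the cutoff row $m-E+1$ must be checked against it.
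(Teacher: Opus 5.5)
Your Steps 1--2 and the first half of Step 3 are correct. They are the paper's proof read in unreversed coordinates: your $M-s$, $a_k$, $\beta$, $s-m$, $u_j$ are the paper's $\rho$, $q_a$, $z$, $\ell$, $R_h$. But what you call the main obstacle is the whole content of the lemma, and neither suggested route is carried out, so there is a genuine gap. Since $u_{m+1}=\cdots=u_s=0$, you have $W_{j+1}=W_{s+1}+\operatorname{span}(u_{j+1},\dots,u_m)$. Nothing in your argument prevents some $u_j$ with $j\le m$ from lying in this space before the rank saturates, which would create a third interval. Triangularity of the $u_j$ cannot settle this alone, because you know nothing about how $W_{s+1}$ sits relative to the rows.

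The missing idea is that the recurrence forces $W_{s+1}$ to meet the subspace of vectors supported in rows $0,\dots,E-\rho-1$ trivially, where $\rho=M-s$.

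\emph{The block is nonsingular.} Suppose $v=\sum_{k>s}\gamma_kc_k$ vanishes in rows $E-\rho,\dots,E-1$. Its entries $V_i=\sum_{k>s}\gamma_kb_{i+k}$ satisfy $V_i-\sum_{k'>s}a_{k'}V_{i+k'-s}=\sum_{k>s}\gamma_k\beta_{i+k}=0$ for $0\le i\le E-1-\rho$, because then $s<i+k\le s+E-1$. Backward induction gives $v=0$, hence $\gamma=0$. So the block $(b_{i+k})_{E-\rho\le i\le E-1,\ s<k\le M}$ is nonsingular; this is the paper's ``leading $\rho\times\rho$ matrix''.

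\emph{The new pivots.} Take $\max(0,m-E+\rho+1)\le j\le m$. Then $u_j$ is supported in rows $\le m-j\le E-\rho-1$ and has the entry $\beta_m\ne0$ in row $m-j$. Suppose $u_j=w+\sum_{j<j'\le m}\lambda_{j'}u_{j'}$ with $w\in W_{s+1}$. Then $w$ vanishes on the bottom $\rho$ rows, so $w=0$, and the remaining relation contradicts triangularity. This gives $\min(m+1,E-\rho)$ consecutive new pivots, after which the rank is $E$ or the columns run out.

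In particular, your endpoint $\max(0,m-E+1)$ is wrong when $\rho>0$. For $m-E+1\le j\le m-E+\rho$, the vector $u_j$ reaches into the bottom $\rho$ rows, where triangularity says nothing. These $j$ are non-pivots only because the rank has saturated.

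A minor point: you define $\beta_m$ only for $m\le s$ but then use $\beta_s,\dots,\beta_{s+E-1}$. The same formula is defined for all $m\le s+E-1$, so this is easily fixed.
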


\begin{proof}
Reverse both the rows and the columns. The columns of the resulting matrix
have the form
\[D_j=(y_j,y_{j+1},\ldots,y_{j+E-1})^{\mathsf T},\] with $0\le j\le M$ and
where $y_j=b_{M+E-1-j}$,
and we now scan them from left to right. If the first $E$ columns are
independent, or if the matrix ends before the first dependence, the pivot
columns form one interval. Otherwise, let $0\le\rho<E$ be the first index
for which $D_\rho$ is dependent on the preceding columns. Equivalently, we may write
\[
D_\rho+\sum_{a=0}^{\rho-1}q_aD_a=0 \text{ and }z_i=y_{i+\rho}+\sum_{a=0}^{\rho-1}q_ay_{i+a}.
\]
Thus $z_0=\cdots=z_{E-1}=0$.

We first observe that the leading $\rho\times\rho$ matrix
$(y_{i+j})_{0\le i,j<\rho}$ is nonsingular. Indeed, suppose that
a non-zero vector $(c_0,\ldots,c_{\rho-1})$ lies in its kernel and set
\[v_i=\sum_{j=0}^{\rho-1}c_jy_{i+j}.\]
Then $v_0=\cdots=v_{\rho-1}=0$, while we also have
\[v_{i+\rho}+\sum_{a=0}^{\rho-1}q_av_{i+a}=\sum_{j=0}^{\rho-1}c_jz_{i+j}=0\]
for $0\le i\le E-1-\rho$. This recurrence applied successively implies that $v_0=\cdots=v_{E-1}=0$, contradicting the linear independence of
$D_0,\ldots,D_{\rho-1}$. 

For every $h\ge0$ for which the columns are defined, we have
\[R_h=D_{\rho+h}+\sum_{a=0}^{\rho-1}q_aD_{a+h}
=(z_h,z_{h+1},\ldots,z_{h+E-1})^{\mathsf T}.\]
If the vector on the right-hand side is zero for every remaining $h$, all subsequent
columns are dependent and the pivots form one interval. Otherwise, let $\ell\ge 1$ be the smallest integer for which
$z_{E-1+\ell}\ne 0$. For $0\le h<\ell$, we have $R_h=0$, so that $D_{\rho +h}$ is a linear combination of $D_{a+h}$ for $0\le a\le \rho-1$. 
Since every such column $D_{a+h}$ occurs before $D_{\rho+h}$, none of
the columns
\[D_\rho,\ldots,D_{\rho+\ell-1}\]
introduces a new pivot.

Now let $0\le j\le\min(E-\rho-1,M-\rho-\ell).$ By minimality of $\ell$, note that the first non-zero coordinate of
\[R_{\ell+j}=(z_{\ell+j},\ldots,z_{\ell+j+E-1})^{\mathsf T}\]
occurs in row $E-1-j$, where it equals $z_{E-1+\ell}$. This means that the
first non-zero coordinates of $R_\ell,R_{\ell+1},$ and so on,
occur successively in rows $E-1,E-2,\ldots,\rho$. In particular,
these vectors are linearly independent.

Let $U$ be the span of the first $\rho$ columns. Since the leading $\rho\times \rho$ matrix is nonsingular (as we showed earlier), the projection of the first $\rho$ coordinates is injective on $U$. As we just checked, the $R_{\ell+j}$ is zero in those coordinates, which means that there can be no non-zero linear combination of the $R_{\ell+j}$ that live in $U$.

Finally, in the definition
\[R_{\ell+j}=D_{\rho+\ell+j}+\sum_{a=0}^{\rho-1}q_aD_{a+\ell+j},\]
every column in the sum has index smaller than
$\rho+\ell+j$. So it follows inductively that each of $D_{\rho+\ell},D_{\rho+\ell+1},$ and so on,
introduces a new pivot, until either the matrix has full row rank or
there are no columns left. Consequently, the pivot columns form at
most two intervals.
\end{proof}

\section{Bounding the singular locus}\label{sec:boundingsing}

Write 
\[F(\vec{x})=\sum_{m=0}^{c}t^{m}F_{m}(\vec{x}).\]
Note that $F_{m}$ doesn't have the variable $t$ and is in fact a
homogeneous polynomial of degree $d$ in $\F_{q}[x_{1},\ldots,x_{n}]$.
We write $\partial_{i}$ as shorthand for $\partial/\partial x_{i}$.
Note that we have 
\[(\partial_{i}F)(\vec{x})=\sum_{m=0}^{c}t^{m}(\partial_{i}F_{m})(\vec{x}).\]
We write 
\[
P_{\alpha,F}(\vec{g})=\alpha(F(\vec{g}(t))),
\]
which is a homogeneous polynomial of degree $d$ on $P_{e}^{n}$ (viewed
as an affine space). Write 
\[
g_{i}(t)=\sum_{u=0}^{e}a_{u,i}t^{u},\vec{a}_{u}=(a_{u,1},\ldots,a_{u,n}),\text{ and }\vec{g}(t)=\sum_{u=0}^{e}\vec{a}_{u}t^{u}.
\]
If $P\in k[z_{1},\ldots,z_{N}]$ is homogeneous of degree $d$, define
\[
\Sing(P)=V\left(\frac{\partial P}{\partial z_{1}},\ldots,\frac{\partial P}{\partial z_{N}}\right)\subset\A^{N}.
\]
Thus $\Sing P$ is the affine singular cone. In particular, $\Sing P_{\alpha,F}$
is cut out by the partial derivatives $\partial P_{\alpha,F}/\partial a_{u,i}$
for $0\le u\le e,1\le i\le n$ inside $P_{e}^{n}$. 

The rest of this section is dedicated to proving the following.
\begin{prop}
\label{prop:codimbound}Assume $V(F)\subset\P^{n-1}_{\F_q(t)}$ is smooth. Then
for any $\alpha\in P_{de+c}^{\vee}$, put
\[
\mu(\alpha)=\max\left(\left\lfloor
\frac{\max(r(\alpha)-c,0)}{d-1}\right\rfloor-1,0\right).
\]
Then,
\[
\codim\Sing P_{\alpha,F}\ge n\mu(\alpha).
\]
\end{prop}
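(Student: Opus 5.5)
We first write out the Jacobian equations and use the multiplication map to organize them. For a single derivative we have
\[
\frac{\partial P_{\alpha,F}}{\partial a_{u,i}}(\vec g)=\alpha\bigl(t^u(\partial_iF)(\vec g)\bigr)=C_\alpha(t^u)\bigl((\partial_iF)(\vec g)\bigr),
\]
using that $(\partial_iF)(\vec g)\in P_{(d-1)e+c}$. So $\Sing P_{\alpha,F}$ is the set of $\vec g$ such that, for every $i$, the polynomial $(\partial_iF)(\vec g)$ lies in the annihilator of the image of $C_\alpha$. That image has dimension $r=r(\alpha)$.

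Write $\alpha$ in coordinates as the Hankel-type data $b_k=\alpha(t^k)$. The equations then read
\[
\sum_{j=0}^{(d-1)e+c} b_{u+j}\,[t^j](\partial_iF)(\vec g)=0,\qquad 0\le u\le e .
\]
We row reduce the matrix $(b_{u+j})$ in the column order $M,M-1,\dots,0$, where $M=(d-1)e+c$. By Lemma \ref{lem:pivotintervals}, the pivot columns form at most two intervals, with $r$ pivots in total. Hence one interval $J$ of consecutive degrees has length at least $\lceil r/2\rceil$. A sharper bookkeeping of the pivot distribution should give the constant in $\mu$.

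After row reduction, each pivot $j\in J$ gives an equation of the form
\[
[t^j](\partial_iF)(\vec g)+\sum_{j'<j}(\ast)\,[t^{j'}](\partial_iF)(\vec g)=0
\]
for each $i$. Here the lower-order terms involve only non-pivot or smaller columns.

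Next we degenerate. We choose integer weights on the variables $a_{u,i}$ that increase with $u$, for instance $w(a_{u,i})=u$. With these weights, the top-weight part of each row-reduced equation is the pure coefficient $[t^j](\partial_iF)(\vec g)$, or more precisely its highest-weight piece. By Lemma \ref{lem:initialideal}, the height of the Jacobian ideal is at least the height of the ideal generated by these initial forms. So it suffices to bound the codimension of the locus where the coefficients $[t^j]\partial_iF(\vec g)$ vanish for all $j\in J$ and all $i$, after passing to initial forms.

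The weighted initial form of $[t^j]\partial_iF(\vec g)$ is essentially $\sum_{m+\text{(sum of $u$'s)}=j}\partial_iF_m$ evaluated on the $\vec a_u$. Taking the weight to be degree in $t$ and isolating the extremal part reduces matters to the constant-coefficient form.

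We first perform a translation $t\mapsto t+\tau$ over an extension field, chosen so that the specialization $F(\tau,\vec x)$ is smooth. This is possible because $V(F)$ is smooth over $K$, so only finitely many fibres are singular. Codimension is unchanged by this extension, and $r(\alpha)$ transforms compatibly. With $F_0$ now smooth, the initial forms become, in blocks, the coefficients of $t^j$ in $\nabla F_0(\vec g)$.

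The key step is a triangularity argument. We group $J$ into consecutive blocks of length $d-1$, and the first $c$ degrees are absorbed by the $t^m$ shift. Each block, with a suitable choice of which $\vec a_u$ has extremal weight, contributes the $n$ equations $\nabla F_0(\vec a_u)+(\text{terms in earlier }\vec a)$. Because $F_0$ is smooth, $V(\nabla F_0)$ is the origin. An induction via regular-sequence or fibration arguments (cut by the $n$ equations for one new $\vec a_u$ at a time, whose leading part is $\nabla F_0(\vec a_u)$, a system with finite zero set in $\vec a_u$) therefore gives codimension $n$ per block. This yields at least
\[
n\left(\left\lfloor\frac{r-c}{d-1}\right\rfloor-1\right)
\]
in total, where the $-1$ accounts for the boundary block.

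The main obstacle is making this triangular structure honest. The coefficient $[t^j]\nabla F_0(\vec g)$ mixes many $\vec a_u$ with $u_1+\dots+u_{d-1}=j$. One must pick the weights, for example strongly convex weights such as $w(a_{u,i})=(d-1)^u$ or lexicographic choices, so that the initial form of the $j$-th equation isolates $\nabla F_0$ evaluated at a single new vector. In such an initial form, $\vec a_u$ is multiplied by $(d-2)$ copies of a fixed earlier vector.

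So the plan is to aim for initial forms like $\partial_iF_0(\vec a_{u},\vec a_{u_0},\dots,\vec a_{u_0})$ with multilinear polarization. Their common vanishing would still have codimension $n$ in $\vec a_u$ on a dense open set of the earlier variables. Here $p>d$ is needed so that the polarization is nondegenerate. I expect this step, controlling which monomials survive in the initial ideal and verifying codimension $n$ per block of $d-1$ pivot degrees, to be where the real work lies.
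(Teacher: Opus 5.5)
Your outline has the right ingredients: Jacobian generators $\alpha(t^u\partial_iF(\vec g))$, row reduction in the order $[t^M],\dots,[t^0]$ with Lemma \ref{lem:pivotintervals}, translating $t$ so that $F_0$ is smooth, and a weighted degeneration with Lemma \ref{lem:initialideal}. But the step that should produce codimension $n$ per block is not carried out, and the weights you propose for it would fail.

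\emph{Weight $u$.} With $w(a_{u,i})=u$, the initial form of the row with pivot degree $w_j$ is $c_{j,w_j}[t^{w_j}](\partial_iF_0)(\vec g)$. This mixes all tuples with $u_1+\dots+u_{d-1}=w_j$, and you give no bound for the height of the ideal these forms generate.

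\emph{Convex weights.} With weights such as $(d-1)^u$, the top-weight tuple is the most unbalanced one. For $w_j\le e$ it is $(w_j,0,\dots,0)$, and the initial form is a nonzero multiple of $\sum_k a_{w_j,k}(\partial_k\partial_iF_0)(\vec a_0)$. In general the extremal tuple always has a slot equal to $0$ or $e$. So for $d\ge3$ every such initial form vanishes on the codimension-$2n$ subspace $\{\vec a_0=\vec a_e=0\}$. They therefore generate an ideal of height at most $2n$, however many pivots you have.

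\emph{Dense open sets.} Your appeal to ``codimension $n$ in $\vec a_u$ on a dense open set of the earlier variables'' cannot repair this. Height is governed by the worst component, for example the one lying over $\vec a_{u_0}=0$.

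\emph{Induction.} There is also no triangular structure for the regular-sequence or fibration induction. The row for pivot $(d-1)q$ contains terms such as $B_{0,i}(\vec a_{q+1},\vec a_{q-1},\vec a_q,\dots,\vec a_q)$, which involve later blocks as well as earlier ones.

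The missing idea is in the choice of pivots and of weight. Keep only pivot degrees divisible by $d-1$, say $w_j=(d-1)q_j\le(d-1)e$. Weight $a_{u,i}$ by a dominant linear term plus a strictly concave correction: the paper uses $e^2+\Lambda u-(e-u)^2-u^2=\Lambda u+2u(e-u)$ with $\Lambda>(d-1)e^2$.
\begin{itemize}
\item The linear part forces the top-weight terms of $\ell_j((\partial_iF)(\vec g))$ to have $w=w_j$ and $m=0$.
\item The concave part singles out the balanced tuple $u_1=\dots=u_{d-1}=q_j$.
\end{itemize}
Hence the initial form is exactly $c_{j,w_j}(\partial_iF_0)(\vec a_{q_j})$. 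Distinct pivots then give copies of the gradient ideal of the smooth form $F_0$ in pairwise disjoint blocks of variables, each of height $n$, and no induction is needed. This divisibility requirement is the actual source of the $1/(d-1)$ in $\mu(\alpha)$, rather than a grouping into blocks.

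Two points of bookkeeping also need fixing. You must use both pivot intervals, via $\lfloor s_1/(d-1)\rfloor+\lfloor s_2/(d-1)\rfloor\ge\lfloor(s_1+s_2)/(d-1)\rfloor-1$; using only the longer interval loses a factor of $2$. And the pivots to discard are the at most $c$ lying above $(d-1)e$, so that $q_j\le e$, not ``the first $c$ degrees''.
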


\begin{proof}
Let $\Jac_{\alpha,F}$ denote the Jacobian ideal of $P_{\alpha,F}$, generated by the partial derivatives
\[\frac{\partial P_{\alpha,F}}{\partial a_{u,i}} \text{ such that }0\le u\le e, 1\le i\le n. \] Thus, \[ \codim\Sing P_{\alpha,F}=\height\Jac_{\alpha,F}. \]
The overall strategy is to arrange that $F_0$ is smooth and then use the special form of the multiplication matrix to find many pivot degrees divisible by $d-1$. A weighted degeneration turns each of these pivots into a copy of the gradient ideal of $F_0$ in a separate block of variables.

\medskip
\noindent\emph{1. Reducing to a smooth constant fiber.}
We may first extend the ground field to its algebraic closure without any loss of generality. Consider the family\[ \mathcal X=V(F)\subset\P^{n-1}\times\A^1. \] Consider the relative singular locus \[ \mathcal Z= V(F, \partial_1F,\ldots,\partial_nF) \subset\P^{n-1}\times\A^1. \] The fiber above a point $b$ is simply the singular locus $V(F)$ with $t$ evaluated at $b$. By properness of $\mathcal Z\to\A^1$, its image is closed. Moreover, its generic fiber is empty because $V(F)\subset\P^{n-1}$ is smooth. We can therefore choose $b\in\ol{\F_q}$ such that the fiber is smooth. Now, consider the translation \[ T_bh(t)=h(t+b). \] Observe that the map $T_b$ gives an invertible linear transformation of every space $P_a$. Let $F'=T_bF$ and $\alpha'=\alpha\circ T_b^{-1}$ so that \[ P_{\alpha',F'}(T_b\vec g)=P_{\alpha,F}(\vec g). \] Moreover, for $h\in P_e$ and $u\in P_{(d-1)e+c}$, we have \[ C_{\alpha'}(T_bh)(T_bu) =\alpha'(T_b(hu)) =\alpha(hu). \] Thus translation preserves both $r(\alpha)$ and $\codim\Sing P_{\alpha,F}$. Hence, replacing $(F,\alpha)$ by $(F',\alpha')$ allows us to assume that $F_0$ is smooth.

\medskip \noindent\emph{2. Jacobian equations with selected pivots.}
Let $M=(d-1)e+c$ and $L_{\alpha}=\im C_{\alpha}\subset P_M^{\vee}$.
In the monomial bases, the matrix of $C_\alpha$ is
\[
(\alpha(t^{u+v}))_{0\le u\le e,\,0\le v\le M}.
\]
Perform Gaussian elimination on $L_\alpha$ using the columns
$[t^M],[t^{M-1}],\ldots,[t^0]$. By Lemma \ref{lem:pivotintervals},
the $r(\alpha)$ pivot degrees form a union of at most two intervals.

At most $c$ pivot degrees are greater than $(d-1)e$. Therefore, at least
$\max(r(\alpha)-c,0)$ pivots lie in $[0,(d-1)e]$, still in at most two
intervals. An interval containing $s$ consecutive integers contains at
least $\lfloor s/(d-1)\rfloor$ multiples of $d-1$. For two intervals
of sizes $s_1$ and $s_2$, we also have
\[
\left\lfloor\frac{s_1}{d-1}\right\rfloor+
\left\lfloor\frac{s_2}{d-1}\right\rfloor
\ge \left\lfloor\frac{s_1+s_2}{d-1}\right\rfloor-1.
\]
It follows that at
least $\mu(\alpha)$ of these pivot degrees are divisible by $d-1$.

Let $w_j=(d-1)q_j$ be these selected pivot degrees. The corresponding
rows in reduced echelon form have the shape
\[
\ell_j=\sum_{w=0}^{w_j}c_{j,w}[t^w]
\] with $c_{j,w_j}\ne0$.
In particular, $\ell_j$ belongs to the span of
$[t^0],\ldots,[t^{(d-1)e}]$.
Let 
\[
G_{j,i}=\ell_{j}((\partial_{i}F)(\vec{g}(t))).
\]
We first check that $G_{j,i}$ lies in the Jacobian ideal cutting
out $\Sing P_{\alpha,F}$. Indeed, by the chain rule, we have 
\[
\frac{\partial P_{\alpha,F}}{\partial a_{u,i}}=\alpha\left(\sum_{r=1}^{n}(\partial_{r}F)(\vec{g}(t))\frac{\partial g_{r}(t)}{\partial a_{u,i}}\right)=\alpha(t^{u}(\partial_{i}F)(\vec{g}(t))).
\]
So if $h_{j}(t)=\sum_{u=0}^eh_{j,u}t^u$ is a lift of $\ell_{j}$, i.e. so that $C_{\alpha}(h_{j})=\ell_{j}$,
then we have \[G_{j,i}=\alpha(h_{j}(\partial_{i}F)(\vec{g}(t)))=\sum_{u=0}^e h_{j,u}\frac{\partial P_{\alpha,F}}{\partial a_{u,i}},\]
a linear combination of the coordinate partial derivatives defining
the Jacobian ideal. In particular, every $G_{j,i}$ is in $\Jac_{\alpha,F}$.

\medskip \noindent\emph{3. Computing the weighted initial forms.} Next, we write $G_{j,i}$ in terms of the symmetric polarizations
of $\partial_{i}F_{m}$. Let $B_{m,i}$ be the symmetric multilinear
polarization of $\partial_{i}F_{m}$, i.e. such that 
\[
B_{m,i}(\vec{a},\ldots,\vec{a})=(\partial_{i}F_{m})(\vec{a})
\]
(here we use that $p>d-1$). We can write 
\[
(\partial_{i}F_{m})(\vec{g}(t))=\sum_{0\le u_{1},\ldots,u_{d-1}\le e}B_{m,i}(\vec{a}_{u_{1}},\ldots,\vec{a}_{u_{d-1}})t^{u_{1}+\cdots+u_{d-1}},
\]
so 
\[
[t^{w}](\partial_{i}F)(\vec{g}(t))=\sum_{\substack{0\le m\le c, \\ 0\le w-m\le(d-1)e}}\sum_{\substack{0\le u_{1},\ldots,u_{d-1}\le e, \\\sum_k u_{k}=w-m}}B_{m,i}(\vec{a}_{u_{1}},\ldots,\vec{a}_{u_{d-1}}).
\]
Hence, we have 
\[
G_{j,i}=\sum_{w=0}^{w_{j}}c_{j,w}\sum_{\substack{0\le m\le c, \\ 0\le w-m\le(d-1)e}}\sum_{\substack{0\le u_{1},\ldots,u_{d-1}\le e, \\\sum_k u_{k}=w-m}}B_{m,i}(\vec{a}_{u_{1}},\ldots,\vec{a}_{u_{d-1}}).
\]
 Next, we choose a suitable weight on the monomials of $\ol{\F_{q}}[a_{u,i}]_{0\le u\le e,1\le i\le n}$:
 
Let $\Lambda$ be any integer greater than $(d-1)e^{2}$ and give each
$a_{u,i}$ the weight
\[
e^{2}+\Lambda u-(e-u)^{2}-u^{2}.
\]
For convenience, we suppress this weight from the notation and write $\init$ for
$\init_w$. Now, each monomial in $B_{m,i}(\vec a_{u_1},\ldots, \vec a_{u_{d-1}})$ with $\sum_{k=1}^{d-1}u_k=w-m$ has weight\[
(d-1)e^2+\Lambda(w-m)
-\sum_{k=1}^{d-1}\bigl((e-u_k)^2+u_k^2\bigr).
\]
The final sum lies between $0$ and $(d-1)e^2$, inclusive. Since
$\Lambda>(d-1)e^2$, the terms of largest weight in $G_{j,i}$ must have
$w=w_j$ and $m=0$ (also recall that the Gaussian elimination made every $w\le w_j$ in the $j$th row).

Recall that $w_j=(d-1)q_j$. Subject to
$u_1+\cdots+u_{d-1}=(d-1)q_j,$
the sum
\[
\sum_{k=1}^{d-1}\bigl((e-u_k)^2+u_k^2\bigr)
\]
is minimized uniquely when $u_1=\cdots=u_{d-1}=q_j$. Indeed, replacing
a pair $(u_a,u_b)$ with $(u_a-1,u_b+1)$ decreases this sum whenever
$u_a\ge u_b+2$. It now follows that
\[
\init(G_{j,i})=c_{j,w_j}(\partial_iF_0)(\vec a_{q_j}).
\]

\medskip \noindent\emph{4. The height estimate.} For each $j$, let $I_j\subset \ol{\F_q}[a_{u,i}]_{0\le u\le e,1\le i\le n}$ be the ideal generated by $\init(G_{j,i})$ for $1\le i\le n$. Smoothness of $F_0$ and Euler's identity give
$V(\partial_1F_0,\ldots,\partial_nF_0)=\{0\}\subset\A^n,$ so $\height I_j=n$. 

Distinct selected pivots give distinct $q_j$, and
hence the ideals $I_j$ use disjoint blocks of variables. Their sum
therefore has height at least $n\mu(\alpha)$.

Since each $c_{j,w_j}$ is non-zero, the preceding initial-form
calculation shows that
\[
\sum_j I_j\subset\init(\Jac_{\alpha,F}).
\]
Lemma \ref{lem:initialideal} now gives
\[
\codim\Sing P_{\alpha,F}
=\height\Jac_{\alpha,F}
=\height\init(\Jac_{\alpha,F})
\ge n\mu(\alpha),
\]
as desired.
\end{proof}

\section{Computing the singular series}\label{sec: compute_sing_series}
For the remainder of the paper, we assume the hypotheses of
Theorem \ref{thm:main}. In particular, $V(F)$ is smooth,
$\eta>0$, and hence $q>(d-1)^2$.

Let $Z\subset\P^{1}$ be a finite subscheme of length $r$. We call
a functional 
\[
\beta\colon H^{0}(Z,\O_{Z}((de+c)\infty))\to\F_{q}
\]
\textit{primitive on $Z$} as above. Define 
\[
S_{Z}(\beta)\coloneqq q^{-nr}\sum_{\vec{x}\in H^{0}(Z,\O_Z(e\infty))^{n}}\psi(\beta(F(\vec{x})))
\]
and 
\[
\mathfrak{S}(F)\coloneqq\sum_{Z\ge0}\sum_{\beta\text{ primitive on }Z}S_{Z}(\beta).
\]
Here $Z$ ranges over effective divisors defined over $\F_q$. We regard the zero functional as primitive on $Z=0$ and set $S_0(0)=1$.
\begin{lem}\label{lem:expsumprim}
Let $Z$ have length $r\ge1$ and $\beta$ be primitive on $Z$. Then,
putting
\[
\mu_r=\max\left(\left\lfloor
\frac{\max(r-c,0)}{d-1}\right\rfloor-1,0\right),
\]
we have
\[|S_{Z}(\beta)|\le \frac{1}{1-(d-1)/\sqrt q}\left(\frac{d-1}{\sqrt q}\right)^{n\mu_r}.\]
\begin{proof}
We construct an $\alpha\in P_{d(r-1)+c}^{\vee}$ such that
$S_Z(\beta)=q^{-nr}S(\alpha)$, where $S(\alpha)$ is formed using $r-1$ as the degree parameter in the notation of the preceding sections. 

For any line bundle $L$, we have an induced map $F\colon L^{n}\to L^{\otimes d}\otimes\O(c\infty).$
Since all line bundles on $Z$ are trivial, choose an isomorphism
$\varphi\colon\O_{Z}((r-1)\infty)\to\O_{Z}(e\infty)$. This induces
an isomorphism $\Phi_{d}\coloneqq \varphi^{\otimes d} \otimes \Id_{\O_Z(c\infty)}\colon \O_{Z}((d(r-1)+c)\infty)\to\O_{Z}((de+c)\infty),$ so that $\Phi_d(F(\vec{x}))=F(\varphi(\vec{x})).$
Let 
\[
\rho_{m}\colon H^{0}(\P^{1},\O((m(r-1)+c)\infty))\to H^{0}(Z,\O_{Z}((m(r-1)+c)\infty))
\]
be the restriction, and note that $\rho_{d}$ and $\rho_{d-1}$ are
surjective by Riemann--Roch. Also, let 
\[
\phi_{1}\colon H^{0}(\P^{1},\O((r-1)\infty))\to H^{0}(Z,\O_{Z}((r-1)\infty))
\]
be the restriction, which is an isomorphism, again by Riemann--Roch.
Then define $\alpha$ to be $\beta\circ H^{0}(Z,\Phi_{d})\circ\rho_{d}$.
Since $\phi_{1}$ is an isomorphism and $\psi(\beta(F(\vec{x})))=\psi(\alpha(F(\vec{x})))$
by construction, we have $S_{Z}(\beta)=q^{-nr}S(\alpha)$. 

Next, we claim that $r(\alpha)=r$. The argument is essentially the
same as that of Lemma \ref{lem:rankvsdeg}, namely that we can write
$C_{\alpha}$ as $\rho_{d-1}^{\vee}\circ B_{Z}\circ\phi_{1}$, where
$B_{Z}\colon H^{0}(Z,\O_{Z}((r-1)\infty))\to H^{0}(Z,\O_{Z}(((d-1)(r-1)+c)\infty))^{\vee}$
is induced by the pairing defined using $\beta$: \[B_Z(a)(u)=\beta(\Phi_d(au)).\] Now, $\rho_{d-1}^{\vee}$
and $\phi_{1}$ are of full rank by the previous paragraph, and $B_{Z}$
is of full rank by the assumption of primitivity. 

If $\mu_r=0$, the claimed inequality follows from the trivial bound
$|S_Z(\beta)|\le1$. Suppose that $\mu_r>0$. Proposition \ref{prop:codimbound}, applied with degree parameter $r-1$, gives \[\codim\Sing P_{\alpha,F}\ge n\mu_r>0.\]
In particular, $P_{\alpha,F}$ is non-zero, so Theorem \ref{thm:Katz}
applies and gives the desired estimate.
\end{proof}
\end{lem}

There are at most $\#\Sym^{r}(\P^{1})(\F_{q})\le q^{r}/(1-q^{-1})$ effective divisors of degree $r$, and for each such divisor there are at most $q^r$ primitive functionals. Since
\[
\mu_r\ge\frac{r-c}{d-1}-2,
\] Lemma \ref{lem:expsumprim} then gives that the degree $r$ contribution is bounded above by
\begin{equation}
\frac{q^{n(1/2-\log_q(d-1))(c/(d-1)+2)}}{(1-q^{-1})(1-(d-1)/\sqrt q)}q^{-(\eta + d - 2)r}.\label{eq:degreeestimate}
\end{equation}
Since $\eta+d-2>0$, it follows that $\mathfrak{S}(F)$ converges
absolutely. 

Next, if we decompose $Z$ as $\sum_{v\in|\P^{1}|}m_{v}[v]$, we can
use the Chinese remainder theorem to decompose each
sum $S_{Z}(\beta)$ into a product of local contributions at each
place:
\[
q^{-R(n-1)\deg v}\#\{\vec{x}\bmod\pi_{v}^{R}\colon F_{v}(\vec{x})=0\}=\sum_{m=0}^{R}\sum_{\beta\text{ primitive on }m[v]}S_{m[v]}(\beta).
\]
Indeed, put $A_R=\O_{\P^1,v}/\pi_v^R$, so that
$|A_R|=q^{R\deg v}$. After using the standard local trivializations
(and the equation $F_\infty(u,\vec{x})=u^cF(u^{-1},\vec{x})$ at infinity),
every $\F_q$-linear functional on $A_R$ has a unique least conductor
$m\le R$ and is the pullback of a primitive functional on $m[v]$.
Orthogonality of characters therefore gives\[\sum_{m=0}^{R}\sum_{\beta\text{ primitive on }m[v]}S_{m[v]}(\beta)
=|A_R|^{-n}\sum_{\vec{x}\in A_R^n}\sum_{\lambda\in A_R^\vee}
\psi(\lambda(F_v(\vec{x})))
=|A_R|^{1-n}\#\{\vec{x}\in A_R^n\colon F_v(\vec{x})=0\},\]
which is the displayed identity. The Chinese remainder theorem also gives
\[
S_Z(\beta)=\prod_v S_{m_v[v]}(\beta_v),
\]and $\beta$ is primitive
on $Z$ precisely when every $\beta_v$ is primitive on $m_v[v]$.
The absolute convergence proved above now permits passage to the limit
in $R$ and regrouping by places. It follows that the local density
limits exist and that
\[
\mathfrak{S}(F)=\prod_{v\in|\P^{1}|}\ell_{v}(F).
\]
To compare this completed support series with the original character sum,
apply the bijection of Lemma \ref{lem:minimalsupport}. Moreover, the
restriction map $P_e\to H^0(Z,\O_Z(e\infty))$ is surjective with fibers
of cardinality $q^{e+1-\deg Z}$, so
\[
S(\alpha)=q^{n(e+1)}S_{Z}(\beta).
\]

\section{Putting everything together}
\begin{proof}[Proof of Theorem \ref{thm:main}]
By the previous section, we have 
\begin{align*}
q^{(de+c+1)-n(e+1)}N_{e}(F) & =q^{-n(e+1)}\sum_{\deg\alpha\le e}S(\alpha)+q^{-n(e+1)}\sum_{\deg\alpha>e}S(\alpha)\\
 & =\mathfrak{S}(F)-\sum_{\substack{\deg Z>e, \\ \beta\text{ primitive on }Z}}S_{Z}(\beta)+q^{-n(e+1)}\sum_{\deg\alpha>e}S(\alpha)\\
 & =\prod_{v\in|\P^{1}|}\ell_{v}(F)-\sum_{\substack{\deg Z>e, \\ \beta\text{ primitive on }Z}}S_{Z}(\beta)+q^{-n(e+1)}\sum_{\deg\alpha>e}S(\alpha).
\end{align*}
Put
\[
\mu_e=\max\left(\left\lfloor
\frac{\max(e+1-c,0)}{d-1}\right\rfloor-1,0\right).
\]
For all sufficiently large $e$, we have $\mu_e>0$. If $\deg\alpha>e$,
Lemma \ref{lem:rankvsdeg} gives $r(\alpha)=e+1$, and Proposition
\ref{prop:codimbound} gives $\codim\Sing P_{\alpha,F}\ge n\mu_e>0,$ so $P_{\alpha,F}$ is non-zero. Moreover, $\eta>0$ implies that $q>(d-1)^2$. Then, applying Theorem \ref{thm:Katz}, the preceding codimension bound gives
the minor arc contribution is bounded as follows:
\[\left|q^{-n(e+1)}\sum_{\deg\alpha>e}S(\alpha)\right| \le q^{-n(e+1)}\sum_{\deg\alpha>e}\left|S(\alpha)\right|
 \le\frac{q^{de+c+1}}{1-(d-1)/\sqrt q}
 \left(\frac{d-1}{\sqrt q}\right)^{n\mu_e}.\]
Since
\[
\mu_e\ge\frac{e+1-c}{d-1}-2,
\]
the minor arc contribution is $O_{F,q,d,n}(q^{-\eta e})$.

Next, by \eqref{eq:degreeestimate}, we have 
\begin{align*}
\left|\sum_{\deg Z>e,\beta\text{ primitive on }Z}S_{Z}(\beta)\right| & \le\sum_{\deg Z>e,\beta\text{ primitive on }Z}\left|S_{Z}(\beta)\right|\\
 & \le\sum_{i=0}^{\infty}\frac{q^{n(1/2-\log_q(d-1))(c/(d-1)+2)}}{(1-q^{-1})(1-(d-1)/\sqrt q)}
 q^{-(\eta+d-2)(e+1)}q^{-(\eta+d-2)i}\\
 & =\frac{q^{n(1/2-\log_q(d-1))(c/(d-1)+2)}}{(1-q^{-1})(1-(d-1)/\sqrt q)(1-q^{-(\eta+d-2)})}q^{-(\eta+d-2)(e+1)}.
\end{align*}
Since $d\ge2$, this is again $O_{F,q,d,n}(q^{-\eta e})$. Combining these contributions
gives 
\[
q^{(de+c+1)-n(e+1)}N_{e}(F)=\prod_{v\in|\P^{1}|}\ell_{v}(F)+O_{F,q,d,n}(q^{-\eta e}),
\]
which after rearranging is precisely Theorem \ref{thm:main}.
\end{proof}
\bibliographystyle{plain}
\bibliography{Quadratic_Birch}
\end{document}